\def\rn#1{\expandafter{\romannumeral #1}} 
\def\Rn#1{\uppercase\expandafter{\romannumeral #1}} 
\newcommand{\fr}{\frac}
\newcommand{\qq}{\qquad}
\newcommand{\la}{\lambda}
\newcommand{\Ga}{\Gamma}
\newcommand{\q}{\quad}
\newcommand{\e}{\varepsilon}
\newcommand{\sta}{\stackrel}
\newcommand{\R}{\mathbb{R}}
\newcommand{\C}{\mathbb{C}}
\newcommand{\Z}{\mathbb{Z}}
\newcommand{\N}{\mathbb{N}}
\newcommand{\B}{\mathcal{B}}
\renewcommand{\H}{\mathcal{H}}
\renewcommand{\L}{\mathcal{L}}
\newcommand{\Ran}{\mathrm{Ran}\,}
\newcommand{\dd}{\mathrm{d}}
\newcommand{\ff}{\mathrm{f}}
\newcommand{\A}{\mathcal{A}}
\DeclareMathOperator*{\op}{\oplus}
\DeclareMathOperator*{\ot}{\otimes}
\newcommand{\ded}{\hfill \mbox{\vrule \kern-.4pt \vbox to 10pt{\hrule width 3pt \vfill \hrule}\kern-.4pt \vrule} \par}
\theoremstyle{plain}
\newtheorem{theo}{Theorem}[section]
\newtheorem{lem}[theo]{Lemma}
\newtheorem{coro}[theo]{Corollary}
\newcommand{\F}{\mathcal{F}}
\newcommand{\supp}{\mathrm{supp}}
\newcommand{\La}{\Lambda}
\newcommand{\pp}{\mathrm{p}}
\newcommand{\ess}{\mathrm{ess}}
\newcommand{\sgn}{\mathrm{sgn}}
\newcommand{\cc}{\mathrm{c}}
\newcommand{\bfp}{\mathbf{p}}
\newcommand{\no}{\nonumber}
\begin{document}

\title{On the spectra of fermionic second quantization operators} %Ver. 10}
\author{Shinichiro Futakuchi\footnote{Email: futakuchi@math.sci.hokudai.ac.jp} and Kouta Usui\footnote{Email: kouta@math.sci.hokudai.ac.jp}\\
\\
\textit{Department of Mathematics,} \\
\textit{Hokkaido University, Sapporo 060-0810, Japan.}}
\maketitle
\abstract{We derive several formulae for the spectra of the second quantization
operators in abstract fermionic Fock spaces.}

\section{Introduction}
Abstract theory of Fock spaces \cite{fock, BR,RS1,RS2} provides powerful mathematical tools when one analyzes models
of quantum field theory, the most promising physical 
theory which is expected to describe the fundamental
interactions of elementary particles. 
This results from the fact that 
quantum filed theory deals with a quantum system with infinitely many degrees of freedom,
including particles which may be created or annihilated, and that Fock spaces are furnished with suitable 
structure to describe particle creation or annihilation.
In mathematical physics, two different types of Fock spaces, bosonic (or symmetric) Fock space and fermionic
(or antisymmetric) Fock spaces, are considered, 
reflecting the fact that 
there are two different sorts of elementary particles in Nature --- bosons and fermions --- .

In mathematical analyses of quantum theories, one of the most important problems 
includes to determine the spectra of various self-adjoint operators representing
physical observables, especially, that of a Hamiltonian, which represents the total energy of the 
system under consideration. 
To each self-adjoint
operator $A$ acting in an underlying one particle Hilbert space $\H$, bosonic or fermionic 
second quantization is 
defined as an operator
which naturally ``lifts" $A$ up to the bosonic or fermionic Fock space over $\H$, respectively. 
In a bosonic Fock space,
the spectra of second quantization operators were well investigated and 
useful formulae for them have been available. However, as far as we know,
the corresponding useful formulae in a fermionic Fock space
are still missing. The main motivation of the present work is to derive such formulae in fermionic Fock spaces
to fill the gap.

Let $ \H $ be an infinite dimensional separable Hilbert space over $ \C $ with inner product $ \langle \cdot , \cdot  \rangle _\H  $ and norm $ \| \cdot  \| _\H $ (we omit the subscript $ \H $ if there will be no danger of confusion). For a linear operator $ T $ on $ \H $, we denote its domain by $ D(T) $. For a subspace $ D \subset D(T) $, the symbol $ T \upharpoonright D $ denotes the restriction of $ T $ to $ D $. We denote by $ \bar{T} $ the closure of $ T $ if $ T $ is closable. The spectrum (resp. the point spectrum) of $ T $ is denoted by $ \sigma (T) $ (resp. $ \sigma _\pp (T) $). The symbol $ \ot ^n \H $ (resp. $ \wedge ^n \H $) denotes the $ n $-fold tensor product of $ \H $ (resp. the $ n $-fold antisymmetric tensor product). Let $ \mathfrak{S} _n $ be the symmetric group of order $ n $. The antisymmetrization operator $ \A _n $ on $ \ot ^n \H $ is defined to be
\begin{align*}
\A _n := \fr{1}{n!} \sum _{\sigma \in \mathfrak{S} _n} \sgn (\sigma ) U_\sigma ,
\end{align*}
where $ U_\sigma $ is a unitary operator on $ \ot ^n \H $ such that $ U_\sigma (\psi _1 \ot \cdots \ot \psi _n) = \psi _{\sigma (1)} \ot \cdots \ot \psi _{\sigma (n)}  , \, \psi _j \in \H , \, j= 1, \dots , n $, and $ \sgn (\sigma ) $ is the signature of the permutation $ \sigma \in \mathfrak{S}_n $. Then, $ \A _n $ is an orthogonal projection onto $ \wedge ^n \H $. The fermionic Fock space over $ \H $ is defined by 
\begin{align*}
\F _\ff (\H ) := \op _{n=0} ^\infty \wedge ^n \H := \Big\{ \Psi = \{ \Psi ^{(n)} \} _{n=0} ^\infty \, \Big| \, \Psi ^{(n)} \in \wedge ^n \H , \, \sum _{n=0} ^\infty \| \Psi ^{(n)}  \| ^2 < \infty \Big\} .
\end{align*}
For a densely defined closable operator $ A $ on $ \H $ and $ j= 1, \dots , n $, we define a linear operator $ \widetilde{A}_j  $ on $ \ot ^n \H $ by
\begin{align*}
\widetilde{A}_j := I \ot \cdots \ot I \ot \sta{j\text{-th}}{\sta{\smallsmile}{A}} \ot I \ot \cdots \ot I ,
\end{align*}
where $ I $ denotes the identity. For each $ n \in \{ 0 \} \cup \N $, a linear operator $ A^{(n)} $ on $ \ot ^n \H $ is defined by 
\begin{align*}
A^{(0)} := 0 , \q A^{(n)} := \overline{  \sum _{j=1} ^n \widetilde{A}_j \upharpoonright \ot ^n _{\mathrm{alg}} D(A) } , \q n\ge 1 ,
\end{align*}
where $ \ot ^n _{\mathrm{alg}} D(A) $ means the $ n $-fold algebraic tensor product of $ D(A) $. Denote the reduced part of $ A^{(n)} $ (resp. $ \ot ^n A $) to $ \wedge ^n \H $ by $ A^{(n)} _\ff $ (resp. $ \wedge ^n A $). The infinite direct sum of these closed operators 
\begin{align*}
& \dd \Ga _\ff (A) := \op _{n=0} ^\infty A_\ff^{(n)} 
\end{align*}
is called the first type fermionic second quantization of $A$,
while the direct sum
\begin{align*}
& \Ga _\ff (A) := \op _{n=0} ^\infty \wedge ^n A
\end{align*}
is the second type.

\section{Main Results}
For a linear operator $ T $ on $\H$, $ \sigma _\mathrm{d} (T) $ denotes the discrete spectrum of $ T $. We introduce the notation
\begin{align}
t(\lambda;\{\lambda_1,\dots,\lambda_n\}):=\#\{j\,|\, \lambda=\lambda_j\},
\end{align}
or, in words, $t(\lambda;\{\lambda_1,\dots,\lambda_n\})$ represents how many 
$\lambda$'s appear in the set $\{\lambda_1,\dots,\lambda_n\}$.
The main results of the present paper are summarized in the following theorems:

\begin{theo}\label{main1} Let $ T $ be a self-adjoint operator on $\H$. 
Then, the following (i), (ii) hold.
\begin{enumerate}
\item The point spectrum of $T_\ff^{(n)}$ is given by 
\begin{align}\label{1.1.1}
\sigma _\pp (T_\ff ^{(n)})  = \Big\{ \sum _{j=1} ^n \la _j \, \Big| \, \la _j \in \sigma _\pp (T) \,(j=1,2,\dots,n), \, \,  t(\la _j;
\{\lambda_1,\dots,\lambda_n\}) \le \dim \ker (T-\la _j) \Big\}  .
\end{align}
\item If $ 0 \notin \sigma _\pp (T) $, then the 
point spectrum of $\wedge^n T$ is given by 
\begin{align}\label{1.1.2}
\sigma _\pp (\wedge ^n T) = \Big\{ \prod _{j=1} ^n \la _j \, \Big| \, \la _j \in \sigma _\pp (T) \, (j=1,2,\dots,n) , \,  
t(\la _j;\{\lambda_1,\dots,\lambda_n\}) \le \dim \ker (T-\la _j) \Big\} .
\end{align}
If $ 0 \in \sigma _\pp (T) $, then it is given by
\begin{align}\label{1.1.3}
\sigma _\pp (\wedge ^n T) = \{ 0 \} \cup \Big\{ \prod _{j=1} ^n \la _j \, \Big| \, \la _j \in \sigma _\pp (T) \, 
(j=1,2,\dots,n), \, 
 t(\la _j;\{\lambda_1,\dots,\lambda_n\}) \le \dim \ker (T-\la _j) \Big\} .
\end{align}
\end{enumerate}
\end{theo}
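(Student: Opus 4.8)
The plan is to reduce the problem to a spectral analysis of the ambient operators $T^{(n)}:=\overline{\sum_{j=1}^{n}\widetilde{T}_j\!\upharpoonright\ot^n_{\mathrm{alg}}D(T)}$ and $\ot^n T$ on the full tensor power $\ot^n\H$, using the joint spectral measure of the strongly commuting self-adjoint family $\widetilde{T}_1,\dots,\widetilde{T}_n$. Since every $U_\sigma$ ($\sigma\in\mathfrak{S}_n$) leaves $\ot^n_{\mathrm{alg}}D(T)$ invariant and commutes there with $\sum_j\widetilde{T}_j$ and with $g_1\ot\cdots\ot g_n\mapsto Tg_1\ot\cdots\ot Tg_n$, the antisymmetrizer $\A_n$ commutes with both $T^{(n)}$ and $\ot^n T$; hence $\wedge^n\H=\Ran\A_n$ reduces them, $T_\ff^{(n)}=T^{(n)}\!\upharpoonright\wedge^n\H$ and $\wedge^n T=(\ot^n T)\!\upharpoonright\wedge^n\H$. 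Consequently a number $E$ lies in $\sigma_\pp(T_\ff^{(n)})$ iff some nonzero $\Psi\in\wedge^n\H$ satisfies $T^{(n)}\Psi=E\Psi$, and similarly for $\wedge^n T$; so it suffices to describe $\ker(T^{(n)}-E)$ and $\ker(\ot^n T-E)$ and to decide which of their vectors are antisymmetric.

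Let $E_T$ denote the spectral measure of $T$ and let $F$ be the joint spectral measure of $(\widetilde{T}_1,\dots,\widetilde{T}_n)$ on $\R^n$, so that $F(B_1\times\cdots\times B_n)=E_T(B_1)\ot\cdots\ot E_T(B_n)$. Since $\ot^n_{\mathrm{alg}}D(T)$ is a core for the self-adjoint operators on the right, one has $T^{(n)}=\int_{\R^n}(x_1+\cdots+x_n)\,dF$ and $\ot^n T=\int_{\R^n}(x_1\cdots x_n)\,dF$, hence $E_{T^{(n)}}(\{E\})=F(\{x:\sum_j x_j=E\})$ and $E_{\ot^n T}(\{E\})=F(\{x:\prod_j x_j=E\})$. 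The crucial claim is that these projections are carried by the grid $G:=\sigma_\pp(T)^n$, that is $F(\{\sum_j x_j=E\}\setminus G)=0$, and $F(\{\prod_j x_j=E\}\setminus G)=0$ when $E\ne0$. Since $F(A)=0$ iff $F(A)(f_1\ot\cdots\ot f_n)=0$ for all $f_j\in\H$, and in the state $f_1\ot\cdots\ot f_n$ the scalar measure $\langle f_1\ot\cdots\ot f_n,\,F(\cdot)\,f_1\ot\cdots\ot f_n\rangle$ equals the product $\rho_{f_1}\times\cdots\times\rho_{f_n}$ of the scalar spectral measures $\rho_{f_j}:=\langle f_j,E_T(\cdot)f_j\rangle$ (whose atoms are eigenvalues of $T$), the claim reduces to a Fubini computation: decomposing each $\rho_{f_j}$ into its pure-point and continuous parts, every term of the resulting $2^n$-term expansion except the all-pure-point one assigns zero mass to the hyperplane $\{\sum_j x_j=E\}$, and likewise to $\{\prod_j x_j=E\}$ when $E\ne0$, because conditioning on all but one coordinate pins the remaining one to a single value, which is null for a continuous measure; and the all-pure-point term lives on $\prod_j(\text{atoms of }\rho_{f_j})\subset G$. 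It follows that $E_{T^{(n)}}(\{E\})=\sum_{\mathbf m}Q_{\mathbf m}$, where $\mathbf m$ ranges over multisets $\{\lambda_1,\dots,\lambda_n\}\subset\sigma_\pp(T)$ with $\sum_j\lambda_j=E$ and $Q_{\mathbf m}:=\sum E_T(\{\mu_1\})\ot\cdots\ot E_T(\{\mu_n\})$, the sum being over the distinct orderings $(\mu_1,\dots,\mu_n)$ of $\mathbf m$; each $Q_{\mathbf m}$ is an orthogonal projection with $\Ran Q_{\mathbf m}\subset\ker(T^{(n)}-E)$, and distinct $\mathbf m$'s give orthogonal $Q_{\mathbf m}$'s. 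The same holds for $\ot^n T$ with $\sum_j\lambda_j=E$ replaced by $\prod_j\lambda_j=E$, provided $E\ne0$.

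Now pass to $\wedge^n\H$. The decisive point is that although $\A_n$ does not commute with the individual terms of $Q_{\mathbf m}$, it does commute with $Q_{\mathbf m}$ itself, since each $U_\sigma$ merely permutes the orderings constituting it. Hence if $\Psi\in\wedge^n\H$ and $T^{(n)}\Psi=E\Psi$, then $\Psi=\sum_{\mathbf m}Q_{\mathbf m}\Psi$ with $Q_{\mathbf m}\Psi=\A_n Q_{\mathbf m}\Psi\in\A_n(\Ran Q_{\mathbf m})$, so $\Psi\ne0$ forces $\A_n(\Ran Q_{\mathbf m})\ne0$ for some $\mathbf m$. Writing $\mathbf m$ as distinct eigenvalues $\mu_1,\dots,\mu_k$ with multiplicities $r_1,\dots,r_k$, one has $\A_n(\Ran Q_{\mathbf m})=\overline{\A_n\big(\bigotimes_{i=1}^{k}\ot^{r_i}\ker(T-\mu_i)\big)}$ (all orderings have the same $\A_n$-image up to sign), and a short exterior-algebra lemma identifies nonvanishing of this space with the condition that $r_i\le\dim\ker(T-\mu_i)$ for every $i$, i.e.\ $t(\lambda_j;\{\lambda_1,\dots,\lambda_n\})\le\dim\ker(T-\lambda_j)$: for ``only if'', if $r_i>\dim\ker(T-\mu_i)$ then $\A_n$ factors through the partial antisymmetrizer over the $r_i$ slots carrying $\ker(T-\mu_i)$, whose image already contains the vanishing factor $\wedge^{r_i}\ker(T-\mu_i)$; for ``if'', $\|\A_n(e_1\ot\cdots\ot e_n)\|^2=(n!)^{-1}\det[\langle e_a,e_b\rangle]\ne0$ for orthonormal eigenvectors. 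This gives ``$\subseteq$'' in \eqref{1.1.1}; the reverse inclusion is immediate, since for eigenvalues obeying the multiplicity bound the antisymmetrization of a tensor product of the corresponding orthonormal eigenvectors lies in $\ot^n_{\mathrm{alg}}D(T)\cap\wedge^n\H$ and is a genuine eigenvector of $T_\ff^{(n)}$ with eigenvalue $\sum_j\lambda_j$. Part (ii) is proved identically with sums replaced by products: for $E\ne0$ the argument is verbatim; for $E=0$, if $0\notin\sigma_\pp(T)$ then each $\widetilde{T}_j$ is injective, hence $F(\{\prod_j x_j=0\})=F(\bigcup_j\{x_j=0\})=0$ and $0\notin\sigma_\pp(\wedge^n T)$, giving \eqref{1.1.2}; while if $0\in\sigma_\pp(T)$, picking $f\in\ker T\setminus\{0\}$ and $g_2,\dots,g_n\in D(T)$ with $f,g_2,\dots,g_n$ linearly independent makes $\A_n(f\ot g_2\ot\cdots\ot g_n)$ a nonzero element of $\ker(\wedge^n T)$, which together with the $E\ne0$ case yields \eqref{1.1.3}.

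The main obstacle is the grid-support step: showing that an eigenvector of $T^{(n)}$ or $\ot^n T$ cannot have a component in the continuous spectral subspace of any tensor factor. This is exactly where the product (independence) structure of the joint spectral measure is essential, and it is handled by the Fubini decomposition above; the remainder is bookkeeping with permutations and the elementary exterior-algebra lemma. One caveat: the functional-calculus identities for $T^{(n)}$ and $\ot^n T$ presuppose essential self-adjointness of $\sum_j\widetilde{T}_j$ and of the algebraic tensor-product operator on $\ot^n_{\mathrm{alg}}D(T)$, which should be cited from, or established together with, the basic properties of these second-quantization operators.
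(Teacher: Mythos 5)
Your proposal is correct, and it organizes the argument rather differently from the paper. The paper's route is: first prove the structural lemma $\H_\pp(T^{(n)})=\ot^n\H_\pp(T)$ (and its analogue for $\ot^n T$ with $\ker(\ot^n T)$ split off) by decomposing $T=T_\pp\oplus T_\cc$ and invoking the known formulae for $\sigma_\pp$ of sums and products of commuting self-adjoint operators (Lemma \ref{A.1}, taken from the literature); then observe that the wedge products $\xi_{k_1}\wedge\cdots\wedge\xi_{k_n}$, $k_1<\cdots<k_n$, of a CONS of eigenvectors form a CONS of $\H_\pp(T_\ff^{(n)})=\wedge^n\H_\pp(T)$, and extract every eigenvalue of $T_\ff^{(n)}$ by pairing an arbitrary eigenvector against this CONS; the constraint $t(\la_j)\le\dim\ker(T-\la_j)$ then comes for free from the strict ordering of the indices. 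You instead work with the joint spectral measure $F$ of $(\widetilde T_1,\dots,\widetilde T_n)$, prove the grid-support statement $F(\{\sum_j x_j=E\}\setminus\sigma_\pp(T)^n)=0$ by a Fubini decomposition of product scalar measures into pure-point and continuous parts (this is in effect a self-contained proof of Lemma \ref{A.1}, which the paper cites as known), decompose the eigenprojection $E_{T^{(n)}}(\{E\})$ into mutually orthogonal blocks $Q_{\mathbf m}$ indexed by multisets of eigenvalues, note that $\A_n$ commutes with each block, and decide nonvanishing of $\A_n\Ran Q_{\mathbf m}$ by the exterior-algebra criterion $\wedge^{r_i}\ker(T-\mu_i)\neq\{0\}$ iff $r_i\le\dim\ker(T-\mu_i)$. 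Your version buys more: it is self-contained where the paper leans on Lemma \ref{A.1}, and it yields an explicit description of each eigenspace, $\ker(T_\ff^{(n)}-E)=\bigoplus_{\mathbf m}\A_n\Ran Q_{\mathbf m}$, hence of multiplicities, which the paper does not record. The cost is the need to justify the functional-calculus identities $T^{(n)}=\int(x_1+\cdots+x_n)\,dF$ and $\ot^n T=\int x_1\cdots x_n\,dF$ (essential self-adjointness on $\ot^n_{\mathrm{alg}}D(T)$), which you correctly flag and which the paper absorbs into Lemma \ref{A.2}. The treatment of the $0\in\sigma_\pp(T)$ case of part (ii) --- splitting off $\ker(\ot^n T)$ and exhibiting an explicit antisymmetric null vector --- is essentially the same in both arguments.
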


\begin{theo}\label{main2} Let $ T $ be a self-adjoint operator on $\H$. Then, the following (i), (ii) hold.
\begin{enumerate}
\item The spectrum of $T_\ff^{(n)}$ is given by 
\begin{align}\label{1.2.1}
\sigma (T^{(n)}_\ff) & = \overline{ \Big\{ \sum _{j=1} ^n \la _j \, \Big| \, \la _j \in \sigma (T) \,  (j=1,2,\dots,n), \, \, \text{if $ \la _j \in \sigma _\dd (T) $,} \, \,  t(\la _j;\{\lambda_1,\dots,\lambda_n\}) \le \dim \ker (T-\la _j) \Big\} } .
\end{align}
\item The spectrum of $\wedge ^n T$ is given by 
\begin{align}\label{1.2.2}
\sigma  (\wedge ^n T) = \overline{ \Big\{ \prod _{j=1} ^n \la _j \, \Big| \, \la _j \in \sigma (T) \,  (j=1,2,\dots,n), \, \, \text{if $ \la _j \in \sigma _\dd (T) $,} \, \,  t(\la _j;\{\lambda_1,\dots,\lambda_n\}) \le \dim \ker (T-\la _j) \Big\} } .
\end{align}
\end{enumerate}
\end{theo}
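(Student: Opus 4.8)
Write $\mathcal{O}$ for $T_\ff^{(n)}$ in part (i) and for $\wedge^n T$ in part (ii), and $\mathcal{O}'$ for $T^{(n)}$, resp.\ $\ot^n T$, acting on $\ot^n\H$; let $S$ be the set of admissible sums $\sum_j\la_j$, resp.\ products $\prod_j\la_j$, appearing inside the closure on the right-hand side, and write $\sigma_\ess(T)=\sigma(T)\setminus\sigma_\dd(T)$. By construction $\mathcal{O}$ is the reduced part of the self-adjoint operator $\mathcal{O}'$ to $\wedge^n\H$, so $E_{\mathcal{O}}(B)=E_{\mathcal{O}'}(B)\restriction\wedge^n\H$ for every Borel set $B$ and $\mathcal{O}\A_n=\A_n\mathcal{O}'$ on $D(\mathcal{O}')$; these are the only structural facts about the reduction that I use. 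I establish $\overline{S}\subseteq\sigma(\mathcal{O})$ and $\sigma(\mathcal{O})\subseteq\overline{S}$ separately, treating (i) and (ii) in parallel.

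\emph{First inclusion.} Since $\sigma(\mathcal{O})$ is closed it suffices to prove $S\subseteq\sigma(\mathcal{O})$. Fix $\la_1,\dots,\la_n\in\sigma(T)$ satisfying the constraint and put $c=\sum_j\la_j$ (resp.\ $c=\prod_j\la_j$). For small $\delta>0$, group the $\la_j$ by value; to a value $\mu$ occurring $t$ times I attach $t$ orthonormal vectors, taken inside $\ker(T-\mu)$ when $\mu\in\sigma_\dd(T)$ (possible because $t\le\dim\ker(T-\mu)$) and inside $\Ran E_T((\mu-\delta,\mu+\delta))$ when $\mu\in\sigma_\ess(T)$ (possible because that subspace is infinite dimensional); choosing $\delta$ below half the smallest gap between occurring values makes the resulting $n$ vectors $\phi_1,\dots,\phi_n$ orthonormal. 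Then $\Phi_\delta:=\sqrt{n!}\,\A_n(\phi_1\ot\cdots\ot\phi_n)$ is a unit vector of $\wedge^n\H$, and $\|(\mathcal{O}-c)\Phi_\delta\|\le\sqrt{n!}\,\|(\mathcal{O}'-c)(\phi_1\ot\cdots\ot\phi_n)\|$, which a telescoping estimate bounds by a constant depending only on $\la_1,\dots,\la_n$ times $\delta$. Letting $\delta\downarrow0$ gives $c\in\sigma(\mathcal{O})$.

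\emph{Second inclusion.} Fix $\la_0\in\sigma(\mathcal{O})$ and $\e>0$; I produce a point of $S$ within $O(\e)$ of $\la_0$. (In part (ii), if $\la_0=0$ then $0\in\sigma(\mathcal{O}')\subseteq\sigma(\ot^n T)$ forces $0\in\sigma(T)$, whence $0\in S$ by taking one factor equal to $0$; so assume $\la_0\neq0$ there.) Choose a unit vector $\Psi\in\Ran E_{\mathcal{O}}((\la_0-\e,\la_0+\e))$; as a vector of $\ot^n\H$ it lies in $\Ran E_{\mathcal{O}'}((\la_0-\e,\la_0+\e))$ and $\|(\mathcal{O}'-\la_0)\Psi\|\le\e$. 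Pick a countable Borel partition $\{R_k\}$ of $\R$ such that each $R_k$ meets $\sigma_\dd(T)$ in at most one point, and when it contains such a point $\mu$ then $R_k\cap\sigma(T)=\{\mu\}$; and such that $\sum_j x_j$, resp.\ $\prod_j x_j$, oscillates by $O(\e)$ over each box $\prod_j R_{k_j}$ that meets the a priori support region $\{|\sum_j x_j-\la_0|<\e\}$, resp.\ $\{\,|\prod_j x_j|<|\la_0|+\e\,\}$ (in the sum case $\mathrm{diam}\,R_k<\e$ works; in the product case one needs cells that are geometrically fine near $0$ and coarse near $\infty$, with mesh ratio controlled by $\e/(|\la_0|+\e)$). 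With $Q_k=E_T(R_k)$ one has $\ot^n\H=\bigoplus_{\bfk}\H_{\bfk}$, $\H_{\bfk}=\Ran(Q_{k_1}\ot\cdots\ot Q_{k_n})$, each $\H_{\bfk}$ reducing $\mathcal{O}'$; write $\Psi=\sum_{\bfk}\Psi_{\bfk}$. The decisive step: because $U_\sigma\Psi=\sgn(\sigma)\Psi$ for all $\sigma\in\mathfrak{S}_n$, inspecting the stabilizer of $\bfk$ shows that if $\Psi_{\bfk}\neq0$ then $\Psi_{\bfk}$ lies in the image of $\bigotimes_a\wedge^{c_a}(Q_{m_a}\H)$, where $m_1,\dots,m_r$ are the distinct entries of $\bfk$ with multiplicities $c_1,\dots,c_r$; hence $\dim Q_{m_a}\H\ge c_a$ for every $a$. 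Choosing $\la_j(\bfk)\in R_{k_j}\cap\sigma(T)$ this forces: if $R_{k_j}$ carries a discrete eigenvalue $\mu$ then $\la_j(\bfk)=\mu$ and $t(\mu;\{\la_1(\bfk),\dots,\la_n(\bfk)\})\le\dim\ker(T-\mu)$, while otherwise $\la_j(\bfk)\in\sigma_\ess(T)$ and no condition is needed; thus $c_{\bfk}:=\sum_j\la_j(\bfk)$, resp.\ $\prod_j\la_j(\bfk)$, belongs to $S$. On $\H_{\bfk}$ the operator $\mathcal{O}'-c_{\bfk}$ has norm $O(\e)$ (this is where the partition enters), so from $\|(\mathcal{O}'-\la_0)\Psi\|^2=\sum_{\bfk}\|(\mathcal{O}'-\la_0)\Psi_{\bfk}\|^2\le\e^2$ a Pythagorean pigeonhole estimate produces a $\bfk$ with $\Psi_{\bfk}\neq0$ and $|c_{\bfk}-\la_0|=O(\e)$. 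Letting $\e\downarrow0$ yields $\la_0\in\overline{S}$.

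\textbf{Main obstacle.} The crux is the second inclusion, and within it the implication ``$\Psi$ antisymmetric $\Rightarrow\dim Q_{m_a}\H\ge c_a$'': one must identify the sign-isotypic part of $\Ran(Q_{k_1}\ot\cdots\ot Q_{k_n})$ under the stabilizer $\mathfrak{S}_{c_1}\times\cdots\times\mathfrak{S}_{c_r}$ of $\bfk$ with $\bigotimes_a\wedge^{c_a}(Q_{m_a}\H)$, and then note that a cell containing a discrete eigenvalue $\mu$ has $\dim Q_k\H=\dim\ker(T-\mu)$ whereas every other cell carries only essential spectrum and hence imposes no condition --- this is exactly what produces the ``if $\la_j\in\sigma_\dd(T)$'' qualifier in the statement. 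A secondary, routine, nuisance is that in part (ii) the partition must have non-uniform mesh to control $\prod_j x_j$ in the absence of an a priori bound on the individual $x_j$, the case $\la_0=0$ being disposed of separately as above.
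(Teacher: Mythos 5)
Your proposal is correct, and while the first inclusion ($S\subseteq\sigma$) is the same Weyl--sequence argument as the paper's (orthonormal vectors drawn from eigenspaces for the discrete $\la_j$ and from the infinite--dimensional spectral subspaces $\Ran E_T(U_\e(\la_j))$ for the essential ones, then a telescoping bound on the wedge product), your second inclusion follows a genuinely different route. The paper does not localize an arbitrary spectral vector: it constructs one explicit CONS of $\wedge^n\H$ from a CONS $\{\zeta_k\}$ of $\Ran E_T(\sigma_\dd(T))$ made of eigenvectors together with a CONS $\{\eta_l\}$ of $\Ran E_T(\sigma_\ess(T))$, observes that for each basis vector $\zeta_{k_1}\wedge\cdots\wedge\zeta_{k_N}\wedge\eta_{l_1}\wedge\cdots\wedge\eta_{l_{N'}}$ the product of the supports of the associated scalar measures lies in $J_{\Sigma_n}$ (the multiplicity constraint being encoded once and for all in the strict ordering $k_1<\cdots<k_N$), and concludes $E_{T_\ff^{(n)}}(\overline{\Sigma_n})=I$ via Lemma \ref{A.2}. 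You instead take an arbitrary $\la_0\in\sigma(T_\ff^{(n)})$, cut $\R$ into cells isolating the discrete eigenvalues, and extract the Pauli constraint from the antisymmetry of the chosen spectral vector itself, by identifying the sign--isotypic component of $\Ran(Q_{k_1}\ot\cdots\ot Q_{k_n})$ under the stabilizer of $\bfk$ with $\bigotimes_a\wedge^{c_a}(Q_{m_a}\H)$ and reading off $c_a\le\dim Q_{m_a}\H$. Your version is more self-contained in that it never needs a basis adapted to the discrete/essential splitting and it makes explicit \emph{why} antisymmetry forces the multiplicity bound on any spectral vector, but it pays with the partition bookkeeping (in particular the multiplicative mesh and the separate treatment of a cell $\{0\}$ in the product case, which the paper sidesteps entirely by omitting the proof of (ii)); the paper's CONS argument is shorter and reduces everything to a support computation for product vectors. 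Both arguments are sound; your sketched steps (the telescoping estimate, the existence of the partition, the Pythagorean pigeonhole) are routine to complete.
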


\section{Proof of the Theorems}

$ \H _\pp (T) $ denotes the closed linear subspace spanned by the eigenvectors of a linear operator $ T $ on $ \H $.

\begin{lem}\label{lem1.1} If $ T $ is a self-adjoint operator, then
\begin{enumerate}
\item 
\begin{align}\label{2.1}
\H _\pp (T^{(n)}) = \ot ^n \H _\pp (T) ,
\end{align}
\item 
\begin{align}\label{2.2}
\H _\pp (\ot ^n T) = \left[\ot ^n ( \H _\pp (T) \cap (\ker T) ^\perp ) \right]\op \ker (\otimes ^n T) .
\end{align}
\end{enumerate}
\end{lem}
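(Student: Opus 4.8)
The plan is to prove the two identities separately, in each case by a mutual-inclusion argument, exploiting the spectral theorem for the self-adjoint operator $T$ and the way tensor products interact with eigenspaces.

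First I would settle (i). The inclusion $\ot^n \H_\pp(T) \subset \H_\pp(T^{(n)})$ is easy: if $\psi_1,\dots,\psi_n$ are eigenvectors of $T$ with eigenvalues $\lambda_1,\dots,\lambda_n$, then $\psi_1\ot\cdots\ot\psi_n$ lies in $D(T^{(n)})$ (it is a finite linear combination of elements of $\ot^n_{\mathrm{alg}}D(T)$, on which $\sum_j\widetilde T_j$ acts) and is an eigenvector of $\sum_j \widetilde T_j$ with eigenvalue $\sum_j\lambda_j$; since such product vectors span $\ot^n\H_\pp(T)$ and $\H_\pp(T^{(n)})$ is closed, the inclusion follows. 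For the reverse inclusion I would use that $T$ is self-adjoint, so by the spectral theorem $\H = \H_\pp(T) \op \H_\pp(T)^\perp$ reduces $T$, and correspondingly $\ot^n\H$ decomposes as an orthogonal direct sum of the $2^n$ subspaces $\H_1\ot\cdots\ot\H_n$ with each $\H_i \in \{\H_\pp(T), \H_\pp(T)^\perp\}$, each reducing every $\widetilde T_j$ and hence $T^{(n)}$. On a summand in which at least one factor, say the $k$-th, equals $\H_\pp(T)^\perp$, the operator $\widetilde T_k$ restricted there has purely continuous spectrum (no eigenvalues), and one must argue that $\sum_j\widetilde T_j$ restricted to that summand also has no eigenvalues. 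This is the step I expect to be the main obstacle: one needs that a sum $\widetilde T_k \otimes I + I\otimes S$ (grouping the $k$-th factor against the rest), with $\widetilde T_k$ having no point spectrum, itself has no point spectrum. This follows by writing the restriction as a direct integral over the spectral measure of $\widetilde T_k$ on the complementary factor — an eigenvector of the sum would, fiberwise, force an eigenvalue of $\widetilde T_k$ on a positive-measure set, contradicting purely continuous spectrum; I would phrase this via the tensor-product spectral theorem, i.e. $E_{\widetilde T_k + \widetilde T_{\mathrm{rest}}}(\{\mu\}) = \int^\oplus E_{\widetilde T_k}(\{\mu - \nu\})\, d(\cdots)$ is zero. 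Granting that, $\H_\pp(T^{(n)})$ is contained in the single summand $\ot^n \H_\pp(T)$, giving (2.1).

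Next I would turn to (ii). Here the operator is $\ot^n T = \widetilde T_1 \widetilde T_2 \cdots \widetilde T_n$ (the product, not the sum), which accounts for the multiplicative structure and the special role of $0$. Again $\ot^n\H$ splits into the $2^n$ summands above, each reducing $\ot^n T$. On the all-$\H_\pp(T)$ summand, product eigenvectors give eigenvalues $\prod_j\lambda_j$, and one isolates the $\ker T$ directions: writing $\H_\pp(T) = (\ker T) \op (\H_\pp(T)\cap(\ker T)^\perp)$ and expanding the $n$-fold tensor product, any summand containing at least one $\ker T$ factor lies in $\ker(\ot^n T)$ (since that factor is killed by the corresponding $\widetilde T_j$), whereas on $\ot^n(\H_\pp(T)\cap(\ker T)^\perp)$ the operator $\ot^n T$ is a product of operators each bounded away from $0$ in the sense of having trivial kernel, so all its eigenvectors have nonzero eigenvalue and lie in that piece. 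I would therefore show $\H_\pp(\ot^n T) \supset \left[\ot^n(\H_\pp(T)\cap(\ker T)^\perp)\right] \op \ker(\ot^n T)$ directly (product eigenvectors span the first, and $\ker(\ot^n T)$ is trivially in the point-eigenspace for eigenvalue $0$), and then prove the reverse inclusion by showing that on any mixed summand $\H_1\ot\cdots\ot\H_n$ with some $\H_k = \H_\pp(T)^\perp$, the restriction of $\ot^n T$ has no nonzero eigenvalue. Since $T\upharpoonright \H_\pp(T)^\perp$ has no point spectrum, the factor $\widetilde T_k$ there has no nonzero eigenvalue; writing $\ot^n T$ on that summand as $\widetilde T_k \otimes R$ for the product $R$ of the remaining factors and using the direct-integral/spectral argument as before, a nonzero eigenvalue of the product would force a nonzero eigenvalue of $\widetilde T_k$ on a positive-measure set — again impossible. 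Combining the two inclusions and noting that the portion of the all-eigenspace summand containing a $\ker T$ factor is absorbed into $\ker(\ot^n T)$ yields (2.2).

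Throughout, the two recurring technical points are: (a) that the decomposition $\H_\pp(T)\op\H_\pp(T)^\perp$ genuinely reduces each $\widetilde T_j$ and hence $T^{(n)}$ and $\ot^n T$ (immediate from self-adjointness of $T$ and the definition of these operators as closures of operators on the algebraic tensor product, which respects the decomposition), and (b) the ``no new eigenvalues from a sum/product with a purely-continuous factor'' lemma, which I would state once as an auxiliary fact about $\widetilde T_k \otimes I + I\otimes S$ and $\widetilde T_k \otimes I \cdot I\otimes S$ and invoke in both parts. The domain subtleties — that the relevant product vectors lie in the domain of $T^{(n)}$, and that taking closures does not enlarge the point spectrum beyond what the decomposition predicts — are routine but should be checked; the genuinely substantive step is (b).
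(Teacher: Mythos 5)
Your proposal is correct and follows essentially the same route as the paper: both decompose $\ot^n\H$ into the $2^n$ reducing summands built from $\H_\pp(T)$ and its orthogonal complement (and, for (ii), further from $\ker T$ and $(\ker T)^\perp$), verify the easy inclusion on product eigenvectors, and kill every summand containing a purely continuous factor. The only difference is that the paper delegates your key step (b) to the quoted Lemma \ref{A.1}, whereas you sketch its proof directly via the product spectral measure; that sketch is sound and is essentially the content of Lemmas \ref{A.1} and \ref{A.2}.
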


\begin{proof} (i) Let $ \sigma _\mathrm{c} (T) := \sigma (T) \setminus \sigma _\mathrm{p} (T) $ and let $ \H _\cc (T) := \Ran E_T (\sigma _\cc (T)) $, where $ E_T (\cdot) $ is the one dimensional spectral measure of $ T $. Then, we obtain the direct sum decomposition $ T= T_\pp \oplus T_\cc  $ corresponding to the 
decomposition $ \H = \H _\pp (T) \op \H _\cc (T) $. Then, one can show that 
\begin{align}\label{directsum-decomposition}
T^{(n)} & = \op _{\sharp _j = \pp \, \, \text{or} \, \, \cc } \overline{ \sum _{j=1} ^n I_{\sharp_1} \otimes \cdots \otimes I_{\sharp _{j-1}} \otimes T_{\sharp_j} \otimes I_{\sharp_{j+1}} \otimes \cdots \otimes I_{\sharp_n} } \nonumber\\
& = \op _{\sharp _j = \pp \, \, \text{or} \, \, \cc } S_{\sharp _1 , \dots , \sharp _n} ,
\end{align}
where $ I_{\sharp _j} $ is the identity operator in $ \H _{\sharp _j} (T) $, and 
\[ S_{\sharp _1 , \dots , \sharp _n}:=\overline{ \sum _{j=1} ^n I_{\sharp_1} \otimes \cdots \otimes I_{\sharp _{j-1}} \otimes T_{\sharp_j} \otimes I_{\sharp_{j+1}} \otimes \cdots \otimes I_{\sharp_n} }. \]
Note that $ T_\cc $ have no eigenvalues. Since $ \sigma _\mathrm{p} (T) $ and $ \sigma _\mathrm{c} (T) $ are disjoint, one finds that,
 if $ \sharp _j =\cc $ for some $ j $, then $ \H _\pp (S_{\sharp _1 , \dots , \sharp _n}) = \{ 0 \} $ by using Lemma \ref{A.1}. Hence, in
 the direct sum decomposition in equation \eqref{directsum-decomposition}, only the term with $\sharp_j=p$, for all $j$, is nontrivial, which implies
\begin{align*}
\H _\pp (T^{(n)} ) & = \op _{\sharp _j = \pp \, \, \text{or} \, \, \cc } \H _\pp ( S_{\sharp _1 , \dots , \sharp _n} ) \\
& = \H _\pp ( S_{\pp , \dots , \pp }) .
\end{align*}
Since $ S_{\pp , \dots , \pp } $ is an operator in $ \ot ^n \H _\pp (T) $, it immediately 
follows that $ \H _\pp ( S_{\pp , \dots , \pp }) \subset \ot ^n \H _\pp (T) $. 

Conversely, let $\psi=\psi_1\otimes\dots\otimes\psi_n\in\otimes^n\H_\pp(T)$ with $\psi_j\in\ker(T-\lambda_j)$.
Then, direct computation shows
\begin{align}
T^{(n)}\psi&=\sum_{j=1}^n \psi_1\otimes \dots \otimes T\psi_j \otimes \dots\otimes \psi_n \no\\
&=\sum_{j=1}^n \psi_1\otimes \dots \otimes \lambda_j\psi_j \otimes \dots\otimes \psi_n \no\\
&=\sum_{j=1}^n \lambda_j \psi.
\end{align}
Thus, we see $\psi\in\ker(T^{(n)}-\lambda)$ with $\lambda=\sum_{j}\lambda_j$, and especially
$\psi\in \H_\pp(T^{(n)})$. Since the closed linear subspace spanned by such $\psi$'s is $\otimes^n \H_\pp(T)$
and $\H_\pp(T^{(n)})$ is closed, the converse inclusion follows.
This proves (\ref{2.1}).

(ii) Let $ T_0 $ and $ T_1 $ be the reduced parts of $ T $ by $ \ker T $ and $ (\ker T) ^\perp $ respectively. Then, we have a direct sum decomposition of $\H _\pp (\ot ^n T)$:
\begin{align*}
\H _\pp (\ot ^n T) = \op _{\flat _j = 0 \, \, \text{or} \, \, 1 } \H _\pp (T_{\flat _1} \ot \cdots \ot T_{\flat _n}) . 
\end{align*}
From this, we learn
\begin{align}\label{point-decomposition}
\H _\pp (\ot ^n T) = \H _\pp (\ot ^n T_1) \op \ker (\ot ^n T) ,
\end{align}
because, if $ \flat _j =0 $ for some $ j $, then $ T_{\flat _1} \ot \cdots \ot T_{\flat _n} $ is a null operator. %Hence, we have $ \big( \ot ^n (\ker T) ^\perp \big) ^\perp = \ker (\ot ^n T) $ since $ \ot ^n \H _\mathrm{p} (T_1) $ and $ \ker (\ot ^n T) $ are orthogonal.

Now, we will show, in general, that for self-adjoint operators $A,B$, whose point spectra do \textit{not} 
contain zero,
 \begin{align}\label{tensor-separating}
 \H_\pp(A\otimes B)=\H_\pp(A)\otimes \H_\pp(B).
 \end{align}  
In the same manner as in the proof of (i), we use the direct sum decompositions
$A=A_\pp \oplus A_\cc$ and $B=B_\pp \oplus B_\cc$ to obtain a decomposition of $\H_\pp(A\otimes B)$:
\begin{align}
\H_\pp(A\otimes B)=\H_\pp(A_\pp\otimes B_\pp)\oplus\H_\pp(A_\pp\otimes B_\cc)\oplus\H_\pp(A_\cc\otimes B_\pp)
\oplus\H_\pp(A_\cc\otimes B_\cc).
\end{align}
But, by Lemma \ref{A.1}, we have $\H_\pp(A_\cc\otimes B_\cc)=\{0\}$. 
Moreover, by the same Lemma, we also have $\H_\pp(A_\pp\otimes B_\cc)=\{0\}$ and 
$\H_\pp(A_\cc\otimes B_\pp)=\{0\}$, because we have assumed that $0\not\in\sigma_\pp(A)$
and $0\not\in\sigma_\pp(B)$. Therefore, one finds
\begin{align}
\H_\pp(A\otimes B)=\H_\pp(A_\pp\otimes B_\pp).
\end{align}
Since the operator $A_\pp\otimes B_\pp$ acts in $\H_\pp(A)\otimes\H_\pp (B)$, it is clear that
$\H_\pp(A_\pp\otimes B_\pp)\subset \H_\pp(A)\otimes\H_\pp (B)$, which means
\[  \H_\pp(A\otimes B)\subset \H_\pp(A)\otimes \H_\pp(B). \] 
The converse inclusion follows from the similar discussion given in (i). Thus, we prove \eqref{tensor-separating}.

The above general discussion shows that 
$ \H _\pp (\ot ^n T_1) = \ot ^n \H _\pp (T_1) $ since $T_1$ does not 
have zero eigenvalue. Substituting this equation in (\ref{point-decomposition}), we obtain (\ref{2.2}).
\end{proof}

\begin{proof}[Proof of Theorem \ref{main1}] 
(i) For vectors $ \zeta _1 , \dots , \zeta _n \in \H $, we define the wedge product of these vectors by
\begin{align*}
\zeta _1 \wedge \cdots \wedge \zeta _n := \sqrt{n!} \A _n (\zeta _1 \ot \cdots \ot \zeta _n) . 
\end{align*}
Let $ \{ \xi _k \} _k $ be a complete orthonormal system (CONS) of $ \H _\pp (T) $ consisting of eigenvectors of $ T $. Then, as is well known, the family
\begin{align}
\Lambda:=\{ \xi _{k_1} \wedge \cdots \wedge \xi _{k_n} \, \big| \, k_1 < \cdots < k_n \} 
\end{align}
forms a CONS of $ \wedge ^n \H _\pp (T) $, and each element is obviously an eigenvector of $ T^{(n)} $. By the reducibility and Lemma \ref{lem1.1} (i), we have
\begin{align}\label{wedge-comp}
\H _\pp (T_\ff ^{(n)}) = \A _n \H _\pp (T^{(n)}) = \A _n \ot ^n \H _\pp (T) = \wedge ^n \H _\pp (T) .
\end{align}

We claim 
\begin{align}\label{all-eigen}
\sigma_\pp(T_\ff^{(n)})=\left\{\lambda\in\R \,\Big|\, \text{There exists } \eta\in\Lambda \, \text{such that } T^{(n)}_\ff\eta=
\lambda\eta \right\}. 
\end{align}
The right hand side is clearly included by the left hand side. To prove
the converse, let $\lambda\in\sigma_\pp(T_\ff^{(n)})$
with an eigenvector $\psi$:
\[ T_\ff^{(n)}\psi=\lambda\psi. \]
Take the inner product with $\xi _{k_1} \wedge \cdots \wedge \xi _{k_n}$ to obtain
\begin{align}\label{cons-van}
\left(\sum_{j=1}^n \lambda_{k_j}-\lambda\right)\langle\xi _{k_1} \wedge \cdots \wedge \xi _{k_n}, \psi\rangle=0,
\end{align}
for all $k_1<\dots<k_n$, where $\lambda_{k_j}$ is an eigenvalue of $T$ to which $\xi_{k_j}$ belongs.
By equation \eqref{wedge-comp}, $\Lambda$ is a CONS of $\H_\pp(T^{(n)}_\ff)$, and thus,
for at least one choice of $(k_1,\dots,k_n)$, $\langle\xi _{k_1} \wedge \cdots \wedge \xi _{k_n}, \psi\rangle\not=0$.
This and equation \eqref{cons-van} imply 
\[ \lambda=\sum_{j=1}^n \lambda_{k_j}, \]
for such $(k_1,\dots,k_n)$. Since $\sum_{j=1}^n \lambda_{k_j}$ is an eigenvalue of $T^{(n)}_\ff$ to which 
 the eigenvector $\xi _{k_1} \wedge \cdots \wedge \xi _{k_n}$ belongs, $\lambda$ is
 an element of the right hand side of \eqref{all-eigen}.
 This proves \eqref{all-eigen}, but the right hand side of \eqref{all-eigen} is exactly the same set
 that appears in the right hand side of \eqref{1.1.1}.  Then, the proof is completed.

 (ii) In the case where $ 0 \notin \sigma _\pp (T) $, we can prove (\ref{1.1.2}) by using Lemma \ref{lem1.1} (ii) in the same way as in the proof of (\ref{1.1.1}). 

Next, suppose $ 0 \in \sigma _\pp (T) $. By the reducibility and Lemma \ref{lem1.1} (ii), we have
\begin{align}\label{decomp}
\H _\pp (\wedge ^n T) = (\wedge ^n \H _\pp (T_1)) \oplus \A _n \ker (\otimes ^n T) .
\end{align}
Let $ \{ \xi _k \} _k $ be a CONS of $\H_\pp(T_1)$ %and $\{\Theta_l \}_l$ a CONS of $A_n \ker (\otimes ^n T) $.
%Then, \eqref{decomp} shows that
%\begin{align}
%\Lambda_0:=\{\xi_{k_1}\wedge\dots\wedge\xi_{k_n}\oplus \Theta_l \,|\, k_1<\dots<k_n,\,l\}
%\end{align}
%forms a CONS of $\H_\pp(\wedge^n T)$.
and
\[ \Lambda_1:=\{\xi_{k_1}\wedge\dots\wedge\xi_{k_n} \,|\, k_1<\dots<k_n\} ,\]
then $\Lambda_1$ forms a CONS of $\wedge^n \H_\pp(T_1)$.
We claim
\begin{align}\label{claim}
\sigma_\pp(\wedge^n T)\setminus\{0\}=\left\{\lambda\in\R\setminus\{0\} \,\Big|\, \text{There exists } \eta\in\Lambda_1 \, \text{such that } \wedge^n T\eta=\lambda\eta \right\}. 
\end{align}
Since the right hand side is clearly included by the left, it suffices to prove that, for each $\lambda\in\sigma_\pp(\wedge^n T)\setminus\{0\}$,
there is an $\eta\in\Lambda_1$ such that $\wedge^n T\eta=\lambda\eta$.
Let $\lambda\in \sigma_\pp(\wedge^n T)\setminus\{0\}$. Then, there exists a $ \psi \in \H _{\mathrm{p}} (\wedge ^n T) $ satisfying 
\[ \wedge^n T \psi = \lambda \psi . \]  
But since $ \la \neq 0 $, we may assume $ \psi \in \wedge ^n \H _\mathrm{p} (T_1) $ 
by \eqref{decomp}. Taking the inner product with $\xi_{k_1}\wedge\dots\wedge\xi_{k_n}$ on both sides,
one obtains
\begin{align}
 \left(\prod_{j=1}^n \lambda_{k_j}-\lambda\right)\langle\xi _{k_1} \wedge \cdots \wedge \xi _{k_n}, \psi\rangle=0,
 \end{align}
 for all $k_1<\dots<k_n$, where $\lambda_{k_j}$ is a non-zero eigenvalue of $T$ to which $\xi_{k_j}$ belongs.
 By noting that $\psi\in\wedge ^n \H _\pp (T_1)$, and $\Lambda_1$ is a CONS of $\wedge ^n \H _\pp (T_1)$,
 we learn that for at least one of $(k_1,\dots,k_n)$'s $\langle\xi _{k_1} \wedge \cdots \wedge \xi _{k_n}, \psi\rangle\not =0$.
 Thus, we have
 \[ \lambda=\prod_{j=1}^n\lambda_{k_j} \]
 for such $(k_1,\dots,k_n)$'s. But since $\prod_{j=1}^n\lambda_{k_j}$ is an eigenvalue to which the eigenvector
 $\xi_{k_1}\wedge\dots\wedge\xi_{k_n}\in\Lambda_1$ belongs, we proved the claim \eqref{claim}.

The right hand side of \eqref{claim} is rewritten as
\begin{align}
\Big\{ \prod _{j=1} ^n \la _j \, \Big| \, \la _j \in \sigma _\pp (T)\setminus\{0\} , \,  
t(\la _j;\{\lambda_1,\dots,\lambda_n\}) \le \dim \ker (T-\la _j) \Big\} ,
\end{align}
and therefore, we have proved
\begin{align}
\sigma_\pp(\wedge^n T)\setminus\{0\}=\Big\{ \prod _{j=1} ^n \la _j \, \Big| \, \la _j \in \sigma _\pp (T)\setminus\{0\} , \,  
t(\la _j;\{\lambda_1,\dots,\lambda_n\}) \le \dim \ker (T-\la _j) \Big\} ,
\end{align}
which implies \eqref{1.1.3}.
\end{proof}

\begin{proof}[Proof of Theorem \ref{main2}] (i) Let 
\[ \Sigma_n :=\Big\{ \sum _{j=1} ^n \la _j \, \Big| \, \la _j \in \sigma (T) , \, \, \text{if $ \la _j \in \sigma _\dd (T) $,} \, \,  t(\la _j;\{\lambda_1,\dots,\lambda_n\}) \le \dim \ker (T-\la _j) \Big\} \]
 be the right hand side of (\ref{1.2.1}) without closure.

First, we prove the left hand side includes the right. Let $ \la = \sum _{j=1} ^n \la _j , \, \la _j \in \sigma (T) $, and if $ \la _j \in \sigma _\dd (T) $, $ t( \la _j ;\{\lambda_1,\dots,\lambda_n\}) \le \dim \ker (T-\la _j ) $. Choose $\e_0>0$ in such a way that $\lambda_j\not=\lambda_k$
implies $U_{\e_0}(\lambda_j)\cap U_{\e_0}(\lambda_k)=\emptyset$, where $ U_\e (\la) $ is the $ \e $-neighborhood of $ \la$.
Then, for all $\e$ with $0<\e \le \e_0$, there exists an orthonormal set $ \{ \psi _j \} _{j=1} ^n \subset D (T) $ such that $ \psi _j \in \Ran E_T (U_\e (\la _j) ) , \,  \| (T- \la _j) \psi  \| <\e  $. Then, we obtain
\begin{align*}
& \| (T^{(n)} _\ff -\la ) \psi _1 \wedge \cdots \wedge \psi _n \| \\
\le & \| \sum _{j=1} ^n \psi _1 \wedge \cdots \wedge (T-\la _j) \psi _j \wedge \cdots \wedge \psi _n \| \\
\le & \sqrt{n!} \e . 
\end{align*}
Hence, $ \la \in \sigma (T_\ff ^{(n)} ) $.

Next, in order to prove the converse inclusion, we will show that there exists a CONS $ \{ \Psi _k \} _{k=1} ^\infty \subset \bigwedge ^n \H $ satisfying the following condition:
\begin{align}\label{supp}
\supp \, \mu _{\Psi _k} \subset \overline{\Sigma_n} , \qq  k=1,2,\dots ,
\end{align}
where $ \mu _{\Psi _k} (B) := \| E_{T_\ff ^{(n)}} (B) \Psi _k \| ^2 $ for a one dimensional borel set $ B \in \B ^1 $. Then, it immediately follows that $ \bigwedge ^n \H = \overline{\L } ( \{ \Psi _k \} _k )  \subset \Ran E_{T_\ff ^{(n)}} (\overline{\Sigma _n}) $. Here,
$\overline{\L } ( \{ \Psi _k \} _k )$ denotes the closed linear supspace spanned by $\{\Psi_k\}_k$. Hence, we obtain 
$E_{T_\ff^{(n)}}(\overline{\Sigma_n})=I $, and this implies $ \sigma (T_\ff ^{(n)} ) =\supp E_{T_\ff^{(n)}}\subset \overline{\Sigma _n } $ by definition of support of a spectral
measure.

Now, we shall show (\ref{supp}). Denote the essential spectrum of $ T $ by $ \sigma _\mathrm{ess} (T) $ and introduce the notations $ \H _\dd (T):=\Ran E_\ff^{(n)}(\sigma_\dd(E_\ff^{(n)})) $ and $\H _\ess (T):=\Ran E_\ff^{(n)}(\sigma_\ess(E_\ff^{(n)})) $. Let
$ \{ \zeta _k \} _k $ be a CONS of $ \H _\dd (T) $ consisting of eigenvectors of $ T $, and let $ \{ \eta _l \} _j $ be a CONS of $ \H _ \ess (T) $. Then, the set
\begin{align*}
\La := \{ \zeta _{k_1} \wedge \cdots \wedge \zeta _{k_N} \wedge \eta _{l_1} \wedge \cdots \wedge \eta _{l_{N'}} \, \Big| \, k_1 < \cdots < k_N , \, l_1 < \cdots < l_{N'} , \, N+N' =n \} 
\end{align*}
forms a CONS of $ \bigwedge ^n \H $. Fix some $ \Psi = \zeta _{k_1} \wedge \cdots \wedge \zeta _{k_N} \wedge \eta _{l_1} \wedge \cdots \wedge \eta _{l_{N'}} \in\Lambda$. In addition, let $ \nu _{k_i} (B) := \| E_T (B) \zeta _{k_i } \| ^2 \, (i=1, \dots , N) , \, \nu _{l_j } (B) := \| E_T (B) \eta _{l_j} \| \, (j=1, \dots , N') $ for $ B \in \B ^1 $, and $ B_i := \supp \, \nu  _{k_i} $, $ B_{N+j} := \supp \, \nu _{l_j} $. Let 
\begin{align*}
J &:= \bigcup _{\sigma \in \mathfrak{S}_n } B_{\sigma (1)} \times \cdots \times B_{\sigma (n)} ,\nonumber \\
J_{\Sigma_n}&:=\{ (\la _1 , \dots , \la _n) \in \R ^n \, | \, \la_1 + \cdots + \la _n \in \Sigma_n \},
\end{align*}
and $E_T^n(\cdot)$ be an $n$-dimensional spectral measure acting in $\otimes^n\H$ defined by the relation
\begin{align*}
E_T^n(B_1\times\dots\times B_n)=E_T(B_1)\otimes\dots\otimes E_T(B_n), \quad B_1,\dots,B_n\in\mathcal{B}^1.
\end{align*}
Then, we see that $ J \subset J_{\Sigma_n } $ by the construction of $ \La $. By direct computation, we have $ E^n _T (J) \Psi =\Psi $. Therefore, it follows that $E_T^n(J_{\Sigma _n}) \Psi=\Psi$ and one finds
\begin{align*}
E_{T_\ff ^{(n)}} (\Sigma_n ) \Psi & = E_{T^{(n)}} (\Sigma_n ) \A _n \Psi = E_T ^n (J_{\Sigma_n} ) \Psi \\
& = \Psi ,
\end{align*}
where Lemma \ref{A.2} is used to obtain the second equality.
This shows that $\supp\mu_{\Psi}\subset\overline{\Sigma_n}$, and therefore, $\Lambda$ is a desired CONS satisfying \eqref{supp}, completing the proof of (\ref{1.2.1}). 

(ii) The proof is very similar to that of (i), and we omit it. 
\end{proof}

As a corollary of these theorems, we can derive the formulae for spectra of the fermonic second quantization 
operators. In what follows, we will use the simpler notation $t(\lambda_j)$ in place of $t(\lambda_j;\{\lambda_1,\dots,\lambda_n\})$ for notational simplicity.

\begin{coro}\label{second-quantization} 
Let $ T $ be a self-adjoint operator. Then, the following (i), (ii) hold.
\begin{enumerate}
\item The spectrum and the point spectrum of the first type fermionic second quantization operator of $ T $ are given by
\begin{align*}
& \sigma (\dd \Ga _\ff  (T)) = \{ 0 \} \cup \overline{ \Big( \bigcup _{n=1} ^\infty \Big\{ \sum _{j=1} ^n \la _j \, \Big| \, \la _j \in \sigma (T) , \, j=1, \dots , n , \, \text{if $ \la _j \in \sigma _\dd (T) $}, \, t( \la _j ) \le \dim \ker (T-\la _j ) \Big\} \Big)  } , \\
& \sigma _\pp  (\dd \Ga _\ff  (T)) = \{ 0 \} \cup \Big( \bigcup _{n=1} ^\infty \Big\{ \sum _{j=1} ^n \la _j \, \Big| \, \la _j \in \sigma _\pp (T) , \, j=1, \dots , n , \,  t( \la _j ) \le \dim \ker (T-\la _j ) \Big\} \Big)  .
\end{align*}
\item As to the second type fermionic second quantization operator of $ T $, one has
\begin{align*}
\sigma (\Ga _\ff  (T)) = \{ 1 \} \cup \overline{ \Big( \bigcup _{n=1} ^\infty \Big\{ \prod _{j=1} ^n \la _j \, \Big| \, \la _j \in \sigma (T) , \, j=1, \dots , n , \,  \text{if $ \la _j \in \sigma _\dd (T) $}, \,t( \la _j ) \le \dim \ker (T-\la _j ) \Big\} \Big) }.
\end{align*}
If $ 0 \notin \sigma _\pp (T) $, then
\begin{align*}
\sigma _\pp ( \Ga _\ff  (T)) = \{ 1 \} \cup \Big( \bigcup _{n=1} ^\infty \Big\{ \sum _{j=1} ^n \la _j \, \Big| \, \la _j \in \sigma _\pp (T) , \, j=1, \dots , n , \,  t( \la _j ) \le \dim \ker (T-\la _j ) \Big\} \Big) .
\end{align*}
If $ 0 \in \sigma _\pp (T) $, then
\begin{align*}
\sigma _\pp ( \Ga _\ff  (T)) = \{ 0 \} \cup \{ 1 \} \cup \Big( \bigcup _{n=1} ^\infty \Big\{ \sum _{j=1} ^n \la _j \, \Big| \, \la _j \in \sigma _\pp (T) , \, j=1, \dots , n , \,  t( \la _j ) \le \dim \ker (T-\la _j ) \Big\} \Big) .
\end{align*}
\end{enumerate}
\end{coro}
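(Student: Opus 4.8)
The plan is to reduce the corollary to Theorems \ref{main1} and \ref{main2} by the elementary spectral calculus for orthogonal direct sums of self-adjoint operators. If $A=\op_{n}A_n$ is such a direct sum, then $\sigma_\pp(A)=\bigcup_n\sigma_\pp(A_n)$: an eigenvector of $A$ has a nonzero component, which is an eigenvector of the corresponding $A_n$ with the same eigenvalue, and conversely any eigenvector of some $A_n$, regarded as a vector of $A$ supported in the $n$-th summand, is an eigenvector of $A$. Furthermore $\sigma(A)=\overline{\bigcup_n\sigma(A_n)}$, since $\la\in\rho(A)$ holds exactly when $\la\in\rho(A_n)$ for every $n$ and $\sup_n\|(A_n-\la)^{-1}\|<\infty$, while for self-adjoint $A_n$ one has $\|(A_n-\la)^{-1}\|=\mathrm{dist}(\la,\sigma(A_n))^{-1}$, so the points of $\overline{\bigcup_n\sigma(A_n)}$ not already lying in $\bigcup_n\sigma(A_n)$ are precisely those at which this supremum is infinite. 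Both $\dd\Ga_\ff(T)=\op_{n=0}^\infty T_\ff^{(n)}$ and $\Ga_\ff(T)=\op_{n=0}^\infty\wedge^n T$ have this form.

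Next I would treat the $n=0$ sector separately. By definition $T_\ff^{(0)}=0$ acts on the one-dimensional space $\wedge^0\H=\C$, so $\sigma(T_\ff^{(0)})=\sigma_\pp(T_\ff^{(0)})=\{0\}$; and $\wedge^0 T$ is, by the usual convention for the vacuum sector, multiplication by $1$ on $\C$, so $\sigma(\wedge^0 T)=\sigma_\pp(\wedge^0 T)=\{1\}$. These yield the isolated points $\{0\}$ in part (i) and $\{1\}$ in part (ii).

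For part (i), I would insert the description of $\sigma_\pp(T_\ff^{(n)})$ from Theorem \ref{main1} (i) into $\sigma_\pp(\dd\Ga_\ff(T))=\bigcup_{n\ge0}\sigma_\pp(T_\ff^{(n)})$, and the description of $\sigma(T_\ff^{(n)})$ from Theorem \ref{main2} (i) into $\sigma(\dd\Ga_\ff(T))=\overline{\bigcup_{n\ge0}\sigma(T_\ff^{(n)})}$; in the latter one uses $\overline{\bigcup_n\overline{S_n}}=\overline{\bigcup_n S_n}$ to pull the closures appearing in \eqref{1.2.1} out past the union, and $\{0\}$ may be left outside the bar because it is closed. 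Part (ii) is handled in the same way with Theorems \ref{main1} (ii) and \ref{main2} (ii). The sole case distinction arises for $\sigma_\pp(\Ga_\ff(T))$: one applies \eqref{1.1.2} when $0\notin\sigma_\pp(T)$ and \eqref{1.1.3} when $0\in\sigma_\pp(T)$, and in the second case every $\sigma_\pp(\wedge^n T)$ with $n\ge1$ already contains $0$, so the point $\{0\}$ appears in the union alongside the $\{1\}$ contributed by the vacuum sector.

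The one step calling for genuine care is the bookkeeping of the isolated points together with the closures: one must verify that $\{0\}$ (resp.\ $\{1\}$) really must be adjoined and is not automatically furnished by the $n\ge1$ terms --- for instance $0$ need not lie in $\sigma(T_\ff^{(n)})$ for any $n\ge1$ when $0\notin\sigma(T)$ and no finite tuple drawn from $\sigma(T)$ sums to $0$, so the $n=0$ summand is genuinely needed --- and that passing the closure through a countable union is legitimate, which is the set identity recorded above. No estimates or fresh constructions enter; the corollary follows at once from the two theorems combined with the direct-sum spectral formulas.
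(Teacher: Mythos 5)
Your proposal is correct and is essentially the derivation the paper intends: the corollary is stated without proof as an immediate consequence of Theorems \ref{main1} and \ref{main2}, and the missing glue is exactly the direct-sum spectral identities $\sigma_\pp(\op_n A_n)=\bigcup_n\sigma_\pp(A_n)$ and $\sigma(\op_n A_n)=\overline{\bigcup_n\sigma(A_n)}$ together with the $n=0$ sectors contributing $\{0\}$ and $\{1\}$, all of which you supply with adequate justification. (Your argument also implicitly corrects what is evidently a typo in the stated formulas for $\sigma_\pp(\Ga_\ff(T))$, where $\sum_{j=1}^n\la_j$ should read $\prod_{j=1}^n\la_j$.)
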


\section{Example --- Kinetic energy of free fermions in a finite box ---}
Let $\F$ be the fermionic Fock space over $\H=l^2(\Gamma_L;\C^4)$, where
\[ \Gamma_L:=\frac{2\pi}{L}\Z^3,\quad L>0 .\]
 This Hilbert space $\H$ consists
of quantum mechanical state vectors of one Dirac fermion in momentum representation living in 
a finite volume box $[-L/2,L/2]^3\subset \R^3$. As a one particle Hamiltonian, we adopt a multiplication operator by a function $E_M$:
\[ E_M(\bfp)=\sqrt{\bfp^2+M^2}, \quad \bfp\in \Gamma_L, \]
where $\bfp\in\R^3$ is a spacial momentum of a Dirac particle and $M\ge0$ is a constant 
representing a bare mass of a Dirac particle. On a spinor space $\C^4$, $E_M$ acts as a diagonal 
matrix.

The spectrum of $E_M$ is given as follows:
\begin{lem}\label{spec-one-part}
\begin{enumerate}
\item The spectrum of $E_M$ is given by
\begin{align}
\sigma(E_M)=\sigma_\dd(E_M)=\left\{ \sqrt{\bfp^2+M^2}\,\Big|\,\bfp\in\Gamma_L \right\}.
\end{align}
\item The multiplicity of eigenvalue $ \la $ is given by
\begin{align}
\dim\ker(E_M-\lambda)=4r\left(\frac{L^2}{4\pi^2}(\lambda^2-M^2)\right),
\end{align}
where $r(N)$ denotes 
\[ r(N):=\#\{\mathbf{n}\in\Z^3\,|\, N=\mathbf{n}^3 \}.\]
\end{enumerate}
\end{lem}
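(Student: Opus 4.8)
The plan is to use that $E_M$ is, by its very definition, a multiplication operator on the discrete set $\Gamma_L$, hence already diagonalized in the canonical orthonormal basis of $\H$. For $\bfp\in\Gamma_L$ let $\delta_\bfp\in l^2(\Gamma_L)$ be the function equal to $1$ at $\bfp$ and $0$ elsewhere, and let $e_1,\dots,e_4$ be the standard basis of $\C^4$; then $\{\delta_\bfp\otimes e_a\mid\bfp\in\Gamma_L,\ a=1,\dots,4\}$ is a CONS of $\H$. Since $E_M$ acts on the spinor factor above $\bfp$ as the scalar matrix $E_M(\bfp)I_4=\sqrt{\bfp^2+M^2}\,I_4$, each $\delta_\bfp\otimes e_a$ is an eigenvector of $E_M$ with eigenvalue $\sqrt{\bfp^2+M^2}$. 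Hence at once $\sigma_\pp(E_M)=\{\sqrt{\bfp^2+M^2}\mid\bfp\in\Gamma_L\}$, and for any such $\lambda$ the space $\ker(E_M-\lambda)$ is the span of those $\delta_\bfp\otimes e_a$ with $\sqrt{\bfp^2+M^2}=\lambda$.

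To upgrade this to the full spectrum I would first observe that the set of eigenvalues has no accumulation point in $\R$: writing $\bfp=(2\pi/L)\mathbf{n}$ with $\mathbf{n}\in\Z^3$ one has $\sqrt{\bfp^2+M^2}\ge|\bfp|=(2\pi/L)|\mathbf{n}|$, so for every $R>0$ the set $\{\bfp\in\Gamma_L\mid E_M(\bfp)\le R\}$ is finite. Consequently, for $\lambda$ not equal to any $\sqrt{\bfp^2+M^2}$ we have $\inf_{\bfp\in\Gamma_L}|\sqrt{\bfp^2+M^2}-\lambda|>0$, so $(E_M-\lambda)^{-1}$ is bounded and $\lambda\notin\sigma(E_M)$; this gives $\sigma(E_M)=\{\sqrt{\bfp^2+M^2}\mid\bfp\in\Gamma_L\}=\sigma_\pp(E_M)$. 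Finally, since each eigenvalue is isolated and, by the discreteness just used, of finite multiplicity, every point of $\sigma(E_M)$ lies in the discrete spectrum, so $\sigma(E_M)=\sigma_\dd(E_M)$; this proves (i).

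For (ii) only a count remains. With $\bfp=(2\pi/L)\mathbf{n}$ the equation $\sqrt{\bfp^2+M^2}=\lambda$ is equivalent to $\bfp^2=\lambda^2-M^2$, i.e. to $\mathbf{n}^2=(L^2/4\pi^2)(\lambda^2-M^2)$; the number of $\mathbf{n}\in\Z^3$ solving it is $r\bigl((L^2/4\pi^2)(\lambda^2-M^2)\bigr)$ with $r$ as in the statement, and multiplying by the spinor dimension $4$ yields $\dim\ker(E_M-\lambda)=4\,r\bigl((L^2/4\pi^2)(\lambda^2-M^2)\bigr)$.

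I do not anticipate a genuine obstacle: once $E_M$ is recognised as a diagonal multiplication operator the statement is essentially bookkeeping, and the only care needed is with the lattice normalisation $(2\pi/L)$ and the spinor multiplicity $4$. The one point worth spelling out is the non-accumulation of the eigenvalues, since it is precisely this that excludes any continuous or residual part and collapses $\sigma$, $\sigma_\pp$ and $\sigma_\dd$ to one and the same set.
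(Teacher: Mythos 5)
Your proposal is correct and, for part (ii), follows essentially the same route as the paper: diagonalize $E_M$ in the canonical basis $\{\delta_\bfp\otimes e_a\}$, identify $\ker(E_M-\lambda)$ as the span of those basis vectors with $\sqrt{\bfp^2+M^2}=\lambda$, and count lattice points times the spinor dimension $4$. The only difference is that the paper dismisses part (i) as well known, whereas you supply the (correct) argument via non-accumulation of the eigenvalues and finiteness of their multiplicities; that addition is sound and does no harm.
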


\begin{proof}
Since (i) is well known, we will prove only (ii).

For each $(\bfp, l)\in\Gamma_L\times\{1,2,3,4\}$, let
\[ \delta_\bfp^l(\mathbf{q},m)=\delta_{\bfp\mathbf{q}}\delta_{lm}, \quad \mathbf{q}\in \Gamma_L, \,m=1,2,3,4 .\]
Then, $\{ \delta_\bfp^l\}_{\bfp,l}$ forms a CONS of $\H$ under natural identification $\H=l^2(\Gamma_L
\times\{1,2,3,4\})$.
From a general theory of multiplication operators, we have
\begin{align}
\ker(E_M -\lambda ) = \{ \psi\in D(E_M)\,|\, \psi(\bfp)\not=0 \text{ implies }\sqrt{\bfp^2+M^2}=\lambda\}.
\end{align} 
This means that $\psi$ is an eigenvector of $E_M$ if and only if it belongs to the linear subspace spanned by
\[ \{\delta_{\bfp}^l\in \H \,|\,\sqrt{\bfp^2+M^2}=\lambda,\,l=1,2,3,4\}. \]
Since the above vectors are linearly independent, we find
\begin{align} 
\dim\ker (E_M-\lambda)&=4\cdot \#\{\bfp \in\Gamma_L\,|\, \sqrt{\bfp^2+M^2}=\lambda \} \nonumber\\
&=4\cdot r\left(\frac{L^2}{4\pi^2}(\lambda^2-M^2)\right).
\end{align}
\end{proof}

From Lemma \ref {spec-one-part} and Corollary \ref{second-quantization}, we finally
 arrive at the formula for the second quantization operator $\dd \Ga _\ff  (E_M)$ acting in $\F$: 
\begin{theo}
\begin{align}
\sigma (\dd \Ga _\ff  (E_M)) 
=\{ 0 \} \cup \overline{ \Big( \bigcup _{n=1} ^\infty \Big\{ \sum _{j=1} ^n 
\sqrt{\frac{4\pi^2}{L^2}N_j+M^2} \, \Big| \, 1\le r(N_j),\,t(N_j;\{N_1,\dots,N_n\}) \le 4r(N_j), \;  j=1, \dots , n  \Big\} \Big)  } .
\end{align}
\end{theo}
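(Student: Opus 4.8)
The plan is to obtain the stated formula as a direct specialization of Corollary~\ref{second-quantization}(i) to $T=E_M$, using Lemma~\ref{spec-one-part} to translate the abstract quantities $\sigma(E_M)$, $\sigma_\dd(E_M)$ and $\dim\ker(E_M-\la)$ into the arithmetic data carried by $r(N)$.

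First I would invoke Lemma~\ref{spec-one-part}(i), which gives $\sigma(E_M)=\sigma_\dd(E_M)$; hence in the formula of Corollary~\ref{second-quantization}(i) the hypothesis ``$\la_j\in\sigma_\dd(T)$'' is automatically satisfied for every admissible tuple, so the multiplicity constraint $t(\la_j)\le\dim\ker(E_M-\la_j)$ is imposed for each $j$. This already yields
\[ \sigma(\dd\Ga_\ff(E_M))=\{0\}\cup\overline{\bigcup_{n=1}^\infty\Big\{\sum_{j=1}^n\la_j\,\Big|\,\la_j\in\sigma(E_M),\ t(\la_j)\le\dim\ker(E_M-\la_j),\ j=1,\dots,n\Big\}}. \]

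Next I would set up the reparametrization of $\sigma(E_M)$ by squared lattice radii. Since $\bfp\in\Gamma_L$ if and only if $\bfp=\tfrac{2\pi}{L}\mathbf{n}$ for some $\mathbf{n}\in\Z^3$, an element of $\sigma(E_M)$ is exactly a number of the form $\sqrt{\tfrac{4\pi^2}{L^2}N+M^2}$ with $N=\mathbf{n}^2$, i.e.\ with $N$ ranging over $\{N\ge 0\mid r(N)\ge 1\}$. The map $N\mapsto\sqrt{\tfrac{4\pi^2}{L^2}N+M^2}$ is strictly increasing, hence a bijection from this set onto $\sigma(E_M)$; consequently, writing $\la_j=\sqrt{\tfrac{4\pi^2}{L^2}N_j+M^2}$, one has $\la_i=\la_j$ if and only if $N_i=N_j$, so that $t(\la_j;\{\la_1,\dots,\la_n\})=t(N_j;\{N_1,\dots,N_n\})$. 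Moreover, Lemma~\ref{spec-one-part}(ii) gives $\dim\ker(E_M-\la_j)=4r\big(\tfrac{L^2}{4\pi^2}(\la_j^2-M^2)\big)=4r(N_j)$.

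Finally I would substitute these identities into the displayed formula: the membership condition $\la_j\in\sigma(E_M)$ becomes $1\le r(N_j)$, the multiplicity constraint becomes $t(N_j;\{N_1,\dots,N_n\})\le 4r(N_j)$, and $\sum_j\la_j$ becomes $\sum_j\sqrt{\tfrac{4\pi^2}{L^2}N_j+M^2}$, which is precisely the asserted identity. There is no serious obstacle here: the only point requiring a moment's care is the strict monotonicity (hence injectivity) of $N\mapsto\sqrt{\tfrac{4\pi^2}{L^2}N+M^2}$, which is exactly what allows the combinatorial function $t$ to be transported verbatim between the $\la$-description and the $N$-description, together with the observation that $r(N_j)\ge 1$ is precisely the condition for $\sqrt{\tfrac{4\pi^2}{L^2}N_j+M^2}$ to belong to $\sigma(E_M)$.
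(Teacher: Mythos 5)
Your proposal is correct and follows exactly the route the paper intends: the theorem is stated there as an immediate consequence of Corollary \ref{second-quantization}(i) and Lemma \ref{spec-one-part}, and you have simply written out the substitution (purely discrete spectrum, $\dim\ker(E_M-\la_j)=4r(N_j)$, and the monotone reparametrization $\la_j=\sqrt{\tfrac{4\pi^2}{L^2}N_j+M^2}$ that transports the multiplicity count $t$) in full detail. No gaps.
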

The crucial difference from the bosonic case is the 
existence of the restriction $t(N_j;\{N_1,\dots,N_n\}) \le 4r(N_j)$ reflecting Pauli's exclusion 
principle. It would be interesting to note that, if Dirac fermions are not contained in a finite box
but live in $\R^3$,
then there is no restriction because the spectrum of $E_M$ in this case consists only of essential
spectra.

\section*{Acknowledgements}
The authors are grateful to Prof. Asao Arai for valuable comments. This work is partially
supported by JSPS Research Fellowships for Young Scientists and 
by World Premier International Center Initiative (WPI Program), MEXT, Japan.

\appendix
\section{Appendix} 
We will collect well known facts in abstract Fock spaces used in this paper.
Detailed proofs will be found in \cite{fock}.
\begin{lem}\label{A.1} Let $ A_j \, (j=1 , \cdots , n) $ be self-adjoint operators on
a separable Hilbert space $\H_i$. Then,

(i) 
\begin{align*}
\sigma _\pp \Big( \overline{ \sum _{j=1} ^n \widetilde{A}_j } \Big) = \Big\{ \sum _{j=1} ^n \la _j \, \Big| \, \la _j \in \sigma _\pp (A_j), \, j=1 , \dots , n \Big\} . 
\end{align*}
(ii) If $ 0 \notin \sigma _\pp (A_j) $ for all $ A_j $, then
\begin{align*}
\sigma _\pp \Big( \ot  _{j=1} ^n A_j \Big) = \Big\{ \prod _{j=1} ^n \la _j \, \Big| \, \la _j \in \sigma _\pp (A_j), \, j=1 , \dots , n \Big\} . 
\end{align*}
If $ 0 \in \sigma _\pp (A_j) $ for a $ A_j $, then
\begin{align*}
\sigma _\pp \Big( \ot  _{j=1} ^n A_j \Big) = \{ 0 \} \cup \Big\{ \prod _{j=1} ^n \la _j \, \Big| \, \la _j \in \sigma _\pp (A_j), \, j=1 , \dots , n \Big\} . 
\end{align*}
\end{lem}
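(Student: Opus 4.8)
The plan is to reduce both assertions to the joint spectral theorem for the finite family of strongly commuting self-adjoint operators $\widetilde{A}_1,\dots,\widetilde{A}_n$ on $\H_1\ot\cdots\ot\H_n$, and then to analyze the level sets of the relevant symbol under the joint spectral measure. Since the $\widetilde{A}_j$ act on mutually distinct tensor factors their spectral projections commute, so they possess a joint spectral measure $E$ on $\R^n$ determined by $E(B_1\times\cdots\times B_n)=E_{A_1}(B_1)\ot\cdots\ot E_{A_n}(B_n)$, where $E_{A_j}$ is the spectral measure of $A_j$. Writing $f(t):=\sum_{j=1}^n t_j$ and $g(t):=\prod_{j=1}^n t_j$ for $t=(t_1,\dots,t_n)\in\R^n$, one has $\overline{\sum_j\widetilde{A}_j}=\int_{\R^n}f\,dE$ and $\ot_{j=1}^n A_j=\int_{\R^n}g\,dE$ by the standard identification of these operators with functions of the joint measure. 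Then, by the functional calculus, $\la$ lies in the point spectrum of either operator exactly when $E\bigl(h^{-1}(\{\la\})\bigr)\neq 0$, with $h=f$ or $h=g$ respectively; so everything is reduced to deciding when the spectral projection of a level set is nonzero.

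The forward inclusions are immediate and I would dispose of them first: given $\la_j\in\sigma_\pp(A_j)$ with unit eigenvectors $\psi_j$, a direct computation shows $\psi_1\ot\cdots\ot\psi_n$ is an eigenvector of $\overline{\sum_j\widetilde{A}_j}$ for the eigenvalue $\sum_j\la_j$ and of $\ot_j A_j$ for $\prod_j\la_j$; and when some $0\in\sigma_\pp(A_j)$, choosing $\psi_j$ in $\ker A_j$ exhibits $0\in\sigma_\pp(\ot_j A_j)$.

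For the reverse inclusions the substance is to show that the level sets carry no mass from the continuous part of $E$. I would decompose each $E_{A_j}=E_{A_j}^\pp\op E_{A_j}^\cc$ according to $\H_j=\H_\pp(A_j)\op\H_\cc(A_j)$, with $E_{A_j}^\cc$ non-atomic, and expand $E$ into the $2^n$ product terms $\bigotimes_{j}E_{A_j}^{\sharp_j}$, $\sharp_j\in\{\pp,\cc\}$. The hard part will be the claim that any term with at least one factor $\sharp_j=\cc$ annihilates the hyperplane $\{f=\la\}$, and for $\la\neq 0$ the hypersurface $\{g=\la\}$. This is a Fubini argument: for a fixed unit product vector the associated scalar measure is the product of the marginals, and for any choice of the other coordinates the $t_j$-slice of the level set is a single point, which has $E_{A_j}^\cc$-measure zero by non-atomicity, so the product measure of the level set vanishes. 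The case of $g$ with $\la\neq0$ relies on the geometric observation $\{g=\la\}\subset(\R\setminus\{0\})^n$, which guarantees that each slice really is a single point.

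Once the continuous terms are discarded, $E(h^{-1}(\{\la\}))$ reduces to $\bigotimes_{j}E_{A_j}^\pp$ evaluated on the level set, a purely atomic measure whose atoms are precisely the tuples $(\la_1,\dots,\la_n)$ of eigenvalues of the $A_j$; the corresponding projections $\bigotimes_{j}E_{A_j}(\{\la_j\})$ are nonzero and mutually orthogonal, so the total projection is nonzero iff some such tuple satisfies $\sum_j\la_j=\la$ (resp.\ $\prod_j\la_j=\la$), which is exactly the right-hand side of (i) (resp.\ (ii)). The dichotomy about $0$ in (ii) I would treat separately: since $\{g=0\}=\bigcup_j\{t_j=0\}$ and $E(\{t_j=0\})=I\ot\cdots\ot E_{A_j}(\{0\})\ot\cdots\ot I$, subadditivity of $E$ gives $E(\{g=0\})=0$ when $0\notin\sigma_\pp(A_j)$ for all $j$, so $0\notin\sigma_\pp(\ot_j A_j)$; whereas if $0\in\sigma_\pp(A_j)$ for some $j$ then $E(\{t_j=0\})\neq0$ forces $0\in\sigma_\pp(\ot_j A_j)$, producing the extra $\{0\}$. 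I expect the non-atomicity/Fubini step to be the only delicate point; the remainder is bookkeeping inside the joint functional calculus.
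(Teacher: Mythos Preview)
Your argument is correct. Note, however, that the paper does not actually prove this lemma: it is placed in the appendix as a ``well known fact'' with a citation to Arai's monograph, so there is no paper proof to compare against. Your route via the joint spectral measure $E$ of the commuting family $\widetilde{A}_1,\dots,\widetilde{A}_n$ is the standard one and meshes perfectly with the paper's own Lemma~A.2, which is precisely the functional-calculus identification $\overline{\sum_j\widetilde{A}_j}=\int f\,dE$ and $\bigotimes_j A_j=\int g\,dE$ that you invoke.

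Two small points worth tightening if you write this out in full. First, the Fubini step shows $E\bigl(h^{-1}(\{\la\})\bigr)\psi=0$ only for \emph{product} vectors $\psi=\psi_1\ot\cdots\ot\psi_n$ with $\psi_j$ in the appropriate $\H_{\sharp_j}(A_j)$; you should then remark that such vectors are total in the corresponding tensor summand and that the projection is bounded, so it vanishes on the whole summand. Second, in the multiplicative case with $\la\neq 0$ and a continuous factor in slot $j$, the slice $\{t_j:\prod_k t_k=\la\}$ is a single point only when $\prod_{k\neq j}t_k\neq 0$; your observation $\{g=\la\}\subset(\R\setminus\{0\})^n$ handles this, but it is worth making explicit that the Fubini integral is over the full $\R^{n-1}$ and that the complementary set $\{\prod_{k\neq j}t_k=0\}$ does not meet $\{g=\la\}$, so the slice is empty there. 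With these clarifications the proof is complete.
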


\begin{lem}\label{A.2} Let $ A_j \, (j=1 , \cdots , n) $ be self-adjoint operators  on 
a Hilbert space $\H_i$. Let $ E := E_{\widetilde{A}_1} \times \cdots \times E_{\widetilde{A}_n} $ be the product measure and $\mathcal{B}_1$ be the Borel field of $\R$. Then,

(i) 
\begin{align*}
E_{\Sigma } (J) := E \Big( \Big\{ (\la _1 , \dots , \la _n ) \in \R ^n \, \Big| \, \sum _{j=1} ^n \la _j \in J \Big\} \Big)  , \qq J \in \B ^1 
\end{align*}
is the spectral measure of $ \overline{ \sum _{j=1} ^n \widetilde{A}_j } $.

(ii) 
\begin{align*}
E_{\otimes } (J) := E \Big( \Big\{ (\la _1 , \dots , \la _n ) \in \R ^n \, \Big| \, \prod _{j=1} ^n \la _j \in J \Big\} \Big) , \qq J \in \B ^1
\end{align*}
 is the spectral measure of $ \ot  _{j=1} ^n A_j $.
\end{lem}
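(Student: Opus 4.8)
The plan is to realize both operators as real Borel functions of the single strongly commuting family $\widetilde{A}_1,\dots,\widetilde{A}_n$ and to apply the transformation rule for spectral measures under the multivariate functional calculus. Because each $\widetilde{A}_j$ acts nontrivially only on the $j$-th tensor factor, its spectral projections $E_{\widetilde{A}_j}(B)=I\ot\cdots\ot E_{A_j}(B)\ot\cdots\ot I$ (with $E_{A_j}(B)$ in the $j$-th slot) commute with those of $\widetilde{A}_k$ for $k\neq j$; hence $\widetilde{A}_1,\dots,\widetilde{A}_n$ strongly commute and admit a joint spectral measure on $\R^n$, which is exactly the product measure $E=E_{\widetilde{A}_1}\times\cdots\times E_{\widetilde{A}_n}$ of the statement. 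By the multidimensional spectral theorem, for any real Borel function $f\colon\R^n\to\R$ the spectral integral $f(\widetilde{A}_1,\dots,\widetilde{A}_n):=\int_{\R^n}f\,dE$ is self-adjoint and its spectral measure is the push-forward $J\mapsto E(f^{-1}(J))$, $J\in\B^1$. Taking $f(\la_1,\dots,\la_n)=\sum_{j}\la_j$ produces $E_\Sigma$ and $f(\la_1,\dots,\la_n)=\prod_j\la_j$ produces $E_{\otimes}$, so the lemma is equivalent to the two operator identities $\overline{\sum_{j=1}^n\widetilde{A}_j}=\int_{\R^n}\sum_{j}\la_j\,dE$ and $\ot_{j=1}^n A_j=\int_{\R^n}\prod_j\la_j\,dE$.

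For part (i) I would first observe that $\int_{\R^n}\la_j\,dE=\widetilde{A}_j$, so by linearity the self-adjoint operator $S:=\int_{\R^n}\sum_j\la_j\,dE$ agrees with $\sum_j\widetilde{A}_j$ on the algebraic tensor product $\ot^n_{\mathrm{alg}}D(A_j)$ and is therefore a closed extension of $\overline{\sum_j\widetilde{A}_j}$. The reverse inclusion, which is the crux, amounts to showing that $\ot^n_{\mathrm{alg}}D(A_j)$ is a core for $S$. I would establish this through the smaller dense subspace $\D$ spanned by product vectors $E_{A_1}(B_1)\psi_1\ot\cdots\ot E_{A_n}(B_n)\psi_n$ with each $B_j$ a bounded Borel set: every such vector is an entire analytic vector for $S$ (the function $\sum_j\la_j$ is bounded on the box $B_1\times\cdots\times B_n$), so by Nelson's analytic vector theorem $S\!\upharpoonright\!\D$ is essentially self-adjoint. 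Since $\D\subset\ot^n_{\mathrm{alg}}D(A_j)$ and $S$ reduces to $\sum_j\widetilde{A}_j$ on $\D$, the closure of $\sum_j\widetilde{A}_j$ on $\ot^n_{\mathrm{alg}}D(A_j)$ already coincides with $S$, which is the first identity.

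For part (ii) the argument runs in parallel once one notes that, the factors acting on disjoint slots, the tensor product operator $\ot_{j=1}^n A_j$ is the closed product $\widetilde{A}_1\widetilde{A}_2\cdots\widetilde{A}_n$ of commuting self-adjoint operators, whose spectral realization is $P:=\int_{\R^n}\prod_j\la_j\,dE$. Again $P$ is self-adjoint and extends $\ot_{j=1}^n A_j$ on the algebraic tensor product, and the same family $\D$ consists of entire vectors for $P$ (now $\prod_j\la_j$ is bounded on each box), so Nelson's theorem makes $\D$ a core and forces $\ot_{j=1}^n A_j=P$. Substituting the two identities into the push-forward formula then identifies $E_\Sigma$ and $E_{\otimes}$ as the spectral measures of $\overline{\sum_j\widetilde{A}_j}$ and $\ot_{j=1}^n A_j$, respectively.

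The genuinely delicate point is the core/essential self-adjointness step, because the $A_j$ are unbounded and one must verify that the algebraic tensor product of domains pins down a unique self-adjoint operator rather than a merely symmetric one; I expect the analytic-vector argument over the spectrally truncated subspace $\D$ to be the decisive ingredient, while the push-forward transformation of spectral measures is a routine consequence of the functional calculus.
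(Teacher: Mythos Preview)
The paper does not give its own proof of this lemma: it is placed in the Appendix with the remark that ``detailed proofs will be found in [fock]'' (Arai's textbook), so there is nothing to compare your argument against line by line. Your proposal is correct and is in fact the standard textbook route---realize the $\widetilde{A}_j$ as a strongly commuting family with joint spectral measure $E$, invoke the push-forward formula $E_{f(\widetilde{A})}(J)=E(f^{-1}(J))$ from the multivariate functional calculus, and then reduce the lemma to the operator identities $\overline{\sum_j\widetilde{A}_j}=\int\sum_j\la_j\,dE$ and $\ot_j A_j=\int\prod_j\la_j\,dE$. Your handling of the only nontrivial point, namely that the algebraic tensor product $\ot^n_{\mathrm{alg}}D(A_j)$ is a core, via the spectrally truncated subspace $\D$ of bounded (hence analytic) vectors and Nelson's theorem, is exactly the standard device and goes through without difficulty; one could equally argue that each $E(B_1\times\cdots\times B_n)\Psi$ with bounded $B_j$ is a \emph{bounded} vector for both $S$ and $P$, so essential self-adjointness is immediate even without invoking analyticity.
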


\end{document}